\documentclass[11pt]{amsart}
\usepackage{amssymb}
\usepackage{amsaddr}
\usepackage[margin=1in]{geometry}
\usepackage[colorlinks]{hyperref}
\usepackage{graphicx}

\newtheorem{theorem}{Theorem}[section]
\newtheorem{lem}[theorem]{Lemma}

\newtheorem{cor}[theorem]{Corollary}

\theoremstyle{definition}

\newtheorem{example}[theorem]{Example}

\theoremstyle{remark}
\newtheorem{remark}[theorem]{Remark}

\numberwithin{equation}{section}

\begin{document}

\newcommand{\spacing}[1]{\renewcommand{\baselinestretch}{#1}\large\normalsize}
\spacing{1.14}
\title[Hermitian and K\"ahler Structures of Two Types on Lie Groups]{Hermitian and K\"ahler Structures of Two Types on Lie Groups with Two-Dimensional Derived Algebras}

\author {Hamid Reza Salimi Moghaddam}

\address{Department of Pure Mathematics,
Faculty of  Mathematics and Statistics,
University of Isfahan,
Isfahan, 81746-73441-Iran.\\
Scopus Author ID: 26534920800, ORCID Id:0000-0001-6112-4259}
\email{salimi.moghaddam@gmail.com and hr.salimi@sci.ui.ac.ir}

\date{\today}

\begin{abstract}
    We study left-invariant Hermitian structures on Lie groups whose derived algebra is two-dimensional. We consider two natural types of almost Hermitian structures, according to the position of the derived algebra with respect to the almost complex structure. For each type, we obtain explicit necessary and sufficient conditions for integrability. In particular, for the second type, integrability is equivalent to the abelianity of the complex structure. We then characterize the K\"ahler structures of both types and show that, in each case, the K\"ahler condition imposes strong restrictions on the Lie bracket. Moreover, we compute the corresponding Bismut connections and torsion forms explicitly. As applications, we construct examples of K\"ahler, WKT, and SKT structures in dimensions four, six, and  in all even dimensions greater than six. In particular, we obtain a family of abelian SKT structures on $\mathfrak r_2\oplus\mathfrak h_3\oplus\mathbb R^{2n-5}$ for $n\geq3$.

    \textbf{Keywords:} complex structure, Hermitian structure, Bismut connection, Lie group, Lie algebra

    \textbf{AMS 2020 Mathematics Subject Classification:} 53C15, 22E15, 22E25, 53C30

\end{abstract}

\maketitle


\section{Introduction}\label{Introduction}
An automorphism $J$ of the tangent bundle of a smooth manifold $M$ satisfying $J^2 = -\mathrm{Id}_{TM}$ is called an almost complex structure. The existence of such a structure implies that the dimension of $M$ is even. For any almost complex structure $J$, the Nijenhuis tensor $N_J$ is defined by
\begin{equation}\label{Nijenhuis tensor}
   N_J(X,Y) = [X,Y] + J\big([JX,Y] + [X,JY]\big) - [JX,JY].
\end{equation}
If $N_J = 0$, then $J$ is said to be integrable and is called a complex structure \cite{KoNo}.

Now, let $(M,g)$ be a Riemannian manifold equipped with an almost complex structure $J$. The pair $(J,g)$ is called an almost Hermitian structure if the metric $g$ is compatible with $J$ in the sense that (see \cite{Ivanov-Petkov,KoNo})
\begin{equation}\label{Hermitian equation}
    g(JX,JY) = g(X,Y) \quad \forall X,Y \in \mathfrak{X}(M).
\end{equation}
If $J$ is integrable, then $(J,g)$ is a Hermitian structure. Every Hermitian structure $(J,g)$ is associated with a fundamental $2$-form $\omega$, the K\"ahler form, defined by $\omega(X,Y) = g(JX,Y)$. The Hermitian structure is called a K\"ahler structure if the K\"ahler form is closed ($d\omega = 0$). An equivalent characterization is that the Levi-Civita connection $\nabla$ of $g$ preserves $J$, i.e., $\nabla J = 0$ \cite{KoNo}. K\"ahler structures play a significant role in theoretical physics \cite{Alvarez-Gaume-Freedman,Rocek,Zumino}.

For any Hermitian structure $(J,g)$ on a smooth manifold $M$, there exists a unique connection $\nabla^B$, known as the Bismut connection (or Strominger connection), satisfying $\nabla^B J = \nabla^B g = 0$ \cite{Andrada-Barberis,Bismut,Strominger}. The torsion tensor $T^B$ of $\nabla^B$ defines a $3$-form $c$ by:
\begin{equation}\label{3-form c}
    c(X,Y,Z) = g(X, T^B(Y,Z)), \quad \forall X,Y,Z \in \mathfrak{X}(M).
\end{equation}
This $3$-form is related to the K\"ahler form by the identity (see \cite{Andrada-Barberis}):
\begin{equation}\label{relation c and omega}
    c(X,Y,Z) = d\omega(JX,JY,JZ), \quad \forall X, Y, Z \in \mathfrak{X}(M).
\end{equation}
Using this relation and the Levi-Civita connection, the Bismut connection can be expressed explicitly as
\begin{equation}\label{Bismut-Levi-Civita equation}
    g(\nabla^B_X Y, Z) = g(\nabla_X Y, Z) + \frac{1}{2}c(X,Y,Z) \quad \forall X,Y,Z \in \mathfrak{X}(M).
\end{equation}

While some manifolds do not admit K\"ahler structures \cite{Latorre-Ugarte}, a weaker condition can be satisfied by certain Hermitian structures. A Hermitian structure is called strong K\"ahler with torsion (SKT) if the associated $3$-form is closed ($dc = 0$). If $c$ is not closed, the structure is referred to as weak (WKT) \cite{Andrada-Barberis}.

Hermitian structures on Lie groups provide an important class of examples in complex and differential geometry. In the left-invariant setting, many geometric questions can be reduced to algebraic conditions on the corresponding Lie algebra. This makes Lie groups particularly suitable for the explicit study of Hermitian, K\"ahler, and Hermitian structures with torsion. Let $G$ be a real Lie group with Lie algebra $\mathfrak{g}$ and derived algebra $\mathfrak{g}'$. An almost complex structure $J$ on $G$ is called left-invariant if it satisfies $J = T l_x \circ J \circ T l_{x^{-1}}$ for all $x \in G$. Furthermore, there is a one-to-one correspondence between inner products $\langle \cdot, \cdot \rangle$ on $\mathfrak{g}$ and left-invariant Riemannian metrics on $G$. The characterization of the two-step nilpotent Lie groups admitting a left-invariant complex structure is given by Barberis in \cite{Barberis2025}. In this article, we consider Lie groups with two-dimensional derived algebras, which are solvable Lie groups whose derived algebra is two-dimensional and abelian.

The dimension of $\frak{g}'$ provides a natural measure of the algebraic complexity of a solvable Lie algebra. In particular, the case $\dim\frak{g}'=1$ leads to a relatively simple description of left-invariant Riemannian and Hermitian structures. The author previously studied Hermitian and hyper-Hermitian structures in this setting \cite{Salimi Moghaddam1}. The purpose of the present paper is to investigate the next case, $\dim\frak{g}'=2$.
This case already exhibits substantially richer behavior and requires additional algebraic data.

A general almost Hermitian structure in this setting may have a nontrivial component of $J(\frak{g}')$ both in $\frak{g}'$ and in $\Gamma$, the orthogonal subspace to $\frak{g}'$. We do not attempt here to classify all such structures. Instead, we focus on two natural classes determined by the position of the derived algebra with respect to the almost complex structure. These two classes are distinguished by the interaction between the almost complex structure and the derived algebra and, importantly, admit an explicit description in terms of the algebraic data. The remaining mixed case, in which $J(\frak{g}')$ has nonzero components both in $\frak{g}'$ and in its orthogonal complement, is not considered in the present work.

Our first objective is to determine when these two types of almost Hermitian structures are integrable.We next study the K\"ahler condition within these two classes. Using the Bismut connection, we obtain explicit necessary and sufficient conditions for a Hermitian structure of Type I or Type II to be K\"ahler. In addition to characterizing the K\"ahler structures, we compute the Bismut connection and its torsion explicitly for both types. These formulas allow us to investigate Hermitian structures with torsion directly at the Lie algebra level. In particular, we provide examples of K\"ahler, WKT, and SKT structures. The examples include structures in dimensions four and six, as well as families in all even dimensions greater than six. Among them, we obtain a family of abelian SKT structures of Type II on $\frak{r}_2\oplus\frak{h}_3\oplus\Bbb{R}^{2n-5}$, for $n\geq3$.

The paper is organized as follows. In Section 2, we describe the algebraic structure of Lie groups with two-dimensional derived algebras and characterize Hermitian structures of Types I and II. Section 3 is devoted to K\"ahler structures and the corresponding Bismut connections. In Section 4, we present explicit examples illustrating the results, including K\"ahler, WKT, and SKT structures.


\section{Hermitian structures}\label{Hermitian structures section}
This section studies Hermitian structures on Lie groups with two-dimensional derived algebras. We begin with a brief description of such Lie groups equipped with left-invariant Riemannian metrics (for further details, see \cite{Salimi Moghaddam2}). We denote the set of $n$-dimensional Lie groups with two-dimensional derived algebras by $\mathrm{Lie}(n,2)$, where $n \geq 3$.

Let $G \in \mathrm{Lie}(n,2)$ with Lie algebra $\mathfrak{g}$. Then $G$ is solvable, and its derived algebra $\mathfrak{g}'$ is a two-dimensional abelian Lie algebra. Let $\langle \cdot, \cdot \rangle$ be an inner product on $\mathfrak{g}$, and let $g$ be the corresponding left-invariant Riemannian metric. Suppose $\{\mathsf{e}_1, \mathsf{e}_2\}$ is an orthonormal basis of $\mathfrak{g}'$, and let $\Gamma$ be the $(n-2)$-dimensional subspace of $\mathfrak{g}$ orthogonal to $\mathfrak{g}'$. There exist unique vectors $\mathsf{a}_1, \mathsf{a}_2, \mathsf{b}_1, \mathsf{b}_2 \in \Gamma$ such that:
\begin{align*}
  [u, \mathsf{e}_1] &= \langle \mathsf{a}_1, u \rangle \mathsf{e}_1 + \langle \mathsf{a}_2, u \rangle \mathsf{e}_2, \\
  [u, \mathsf{e}_2] &= \langle \mathsf{b}_1, u \rangle \mathsf{e}_1 + \langle \mathsf{b}_2, u \rangle \mathsf{e}_2, \quad \forall u \in \Gamma.
\end{align*}
Furthermore, there exist skew-adjoint linear maps $f_1, f_2 : \Gamma \to \Gamma$ such that for all $u, v \in \Gamma$:
\begin{equation*}
    [u, v] = \langle f_1(u), v \rangle \mathsf{e}_1 + \langle f_2(u), v \rangle \mathsf{e}_2.
\end{equation*}

By applying the Jacobi identity and the Cauchy-Schwarz inequality, one can show that $\mathsf{b}_1$ is a scalar multiple of $\mathsf{a}_2$. We do not use this relation to see the symmetry of the formulas.

As shown in \cite{Salimi Moghaddam2}, the Levi-Civita connection of $(G, g)$ is given by:
\begin{align*}
    & \nabla_{\mathsf{e}_1} \mathsf{e}_1 = \mathsf{a}_1, \ \ \ \nabla_{\mathsf{e}_1} \mathsf{e}_2 = \tfrac{1}{2}(\mathsf{a}_2 + \mathsf{b}_1), \ \ \ \nabla_{\mathsf{e}_1} u = -\langle \mathsf{a}_1, u \rangle \mathsf{e}_1 - \tfrac{1}{2}\left( \langle \mathsf{b}_1 + \mathsf{a}_2, u \rangle \mathsf{e}_2 + f_1(u) \right), \\
    & \nabla_{\mathsf{e}_2} \mathsf{e}_1 = \tfrac{1}{2}(\mathsf{a}_2 + \mathsf{b}_1), \ \ \ \nabla_{\mathsf{e}_2} \mathsf{e}_2 = \mathsf{b}_2, \ \ \ \nabla_{\mathsf{e}_2} u = -\langle \mathsf{b}_2, u \rangle \mathsf{e}_2 - \tfrac{1}{2}\left( \langle \mathsf{b}_1 + \mathsf{a}_2, u \rangle \mathsf{e}_1 + f_2(u) \right), \\
    & \nabla_v \mathsf{e}_1 = \tfrac{1}{2}\left( \langle \mathsf{a}_2 - \mathsf{b}_1, v \rangle \mathsf{e}_2 - f_1(v) \right), \ \ \ \nabla_v \mathsf{e}_2 = \tfrac{1}{2}\left( \langle \mathsf{b}_1 - \mathsf{a}_2, v \rangle \mathsf{e}_1 - f_2(v) \right), \\
    & \nabla_v u = \tfrac{1}{2}\left( \langle f_1(v), u \rangle \mathsf{e}_1 + \langle f_2(v), u \rangle \mathsf{e}_2 \right),
\end{align*}
for all $u, v \in \Gamma$. If $\{u, v\}$ is an orthonormal set in $\Gamma$, the sectional curvatures are:
\begin{align*}
    K(\mathsf{e}_1, \mathsf{e}_2) &= \tfrac{1}{4} \|\mathsf{a}_2 + \mathsf{b}_1\|^2 - \langle \mathsf{a}_1, \mathsf{b}_2 \rangle, \\
    K(\mathsf{e}_1, u) &= \tfrac{1}{4} \left( \langle \mathsf{b}_1, u \rangle^2 - 3 \langle \mathsf{a}_2, u \rangle^2 + \|f_1(u)\|^2 \right) - \langle \mathsf{a}_1, u \rangle^2 - \tfrac{1}{2} \langle \mathsf{a}_2, u \rangle \langle \mathsf{b}_1, u \rangle, \\
    K(\mathsf{e}_2, u) &= \tfrac{1}{4} \left( \langle \mathsf{a}_2, u \rangle^2 - 3 \langle \mathsf{b}_1, u \rangle^2 + \|f_2(u)\|^2 \right) - \langle \mathsf{b}_2, u \rangle^2 - \tfrac{1}{2} \langle \mathsf{a}_2, u \rangle \langle \mathsf{b}_1, u \rangle, \\
    K(u, v) &= -\tfrac{3}{4} \left( \langle u, f_1(v) \rangle^2 + \langle u, f_2(v) \rangle^2 \right).
\end{align*}

Now, let $(J, g)$ be an almost Hermitian structure on $G \in \mathrm{Lie}(n,2)$. Since $(J, g)$ is almost Hermitian, $\langle Jx, x \rangle = 0$ for all $x \in \mathfrak{g}$. Therefore, $J\mathsf{e}_1 = \lambda u + \mu \mathsf{e}_2$ for some $u \in \Gamma$ and $\lambda, \mu \in \mathbb{R}$ with $\lambda^2 + \mu^2 = 1$.

\begin{example}
    Let $\mathfrak{g} = \mathbb{R}^4$ be equipped with the Euclidean inner product $\langle \cdot, \cdot \rangle$ and the natural orthonormal basis $\{e_1, e_2, e_3, e_4\}$, with non-zero Lie brackets:
    \begin{equation*}
        [e_3, e_1] = e_1, \quad [e_3, e_4] = e_2.
    \end{equation*}
    Then $G \in \mathrm{Lie}(4,2)$, $\mathsf{e}_1=e_1$, $\mathsf{e}_2=e_2$, and the following almost complex structure $J$ defines an almost Hermitian structure:
    \begin{align*}
        J\mathsf{e}_1 &= \tfrac{\sqrt{2}}{2}(e_3 - \mathsf{e}_2), &
        J\mathsf{e}_2 &= \tfrac{\sqrt{2}}{2}(\mathsf{e}_1 + e_4), \\
        Je_3 &= \tfrac{\sqrt{2}}{2}(e_4 - \mathsf{e}_1), &
        Je_4 &= \tfrac{\sqrt{2}}{2}(-\mathsf{e}_2 - e_3).
    \end{align*}
\end{example}

In this paper, we do not consider almost Hermitian structures with both $\lambda \neq 0$ and $\mu \neq 0$ (as in the example above). Instead, we focus on the following two special cases.

Let $G \in \mathrm{Lie}(n,2)$ be equipped with an arbitrary left-invariant Riemannian metric $g$. We consider the following types of almost complex structures $J$ such that $(J, g)$ is an almost Hermitian structure.

\begin{description}
  \item[Type I] $J(\frak{g}')=\frak{g}'$,
  \item[Type II] $J(\frak{g}')\subseteq\Gamma$.
\end{description}
\begin{lem}\label{A SO(2)}
An almost Hermitian structure $J$ is of type I if and only if $J\mathsf{e}_1 =\pm\mathsf{e}_2$.
\end{lem}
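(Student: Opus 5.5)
We prove the two implications separately, using only that $J$ is orthogonal and satisfies $J^2=-\mathrm{Id}$. The earlier observation that $\langle Jx,x\rangle=0$ for all $x\in\mathfrak g$ will do most of the work.

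For the forward direction, assume $J(\mathfrak g')=\mathfrak g'$. Since $J\mathsf e_1\in\mathfrak g'$, we can write $J\mathsf e_1=\alpha\mathsf e_1+\beta\mathsf e_2$. Skew-symmetry of $J$ gives $\langle J\mathsf e_1,\mathsf e_1\rangle=0$, so $\alpha=0$. Because $J$ is an isometry, $\|J\mathsf e_1\|=\|\mathsf e_1\|=1$, so $\beta^2=1$ and hence $J\mathsf e_1=\pm\mathsf e_2$. Equivalently, this is the case $\lambda=0$, $\mu=\pm1$ in the decomposition $J\mathsf e_1=\lambda u+\mu\mathsf e_2$ noted before the example.

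For the converse, assume $J\mathsf e_1=\pm\mathsf e_2$. Applying $J$ once more gives $\pm J\mathsf e_2=J^2\mathsf e_1=-\mathsf e_1$, so $J\mathsf e_2=\mp\mathsf e_1$. Thus $J$ maps the basis $\{\mathsf e_1,\mathsf e_2\}$ into $\mathfrak g'$, and so $J(\mathfrak g')\subseteq\mathfrak g'$. Since $J$ is invertible and $\mathfrak g'$ is finite dimensional, this inclusion is an equality, $J(\mathfrak g')=\mathfrak g'$.

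Both directions are routine. The only point needing care is that the definition of Type I asks for equality rather than inclusion, and injectivity of $J$ supplies it.
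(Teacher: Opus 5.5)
Your proof is correct and follows essentially the paper's argument. The paper writes $J|_{\mathfrak g'}$ as a matrix and uses orthogonality with $J^2=-I$ to make it skew-symmetric, then $A^2=-I$ to get $a=\pm1$; this matches your vanishing of $\alpha$ via $\langle J\mathsf e_1,\mathsf e_1\rangle=0$ and the normalization $\beta^2=1$, and your converse ($J\mathsf e_2=\mp\mathsf e_1$, then injectivity to turn inclusion into equality) fills in the direction the paper calls trivial.
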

\begin{proof}
One side is trivial, so we prove the other side. Suppose that $J(\frak{g}')=\frak{g}'$. Since $J$ is an almost Hermitian structure, $A:=J|_{\frak{g}'}:\frak{g}'\rightarrow\frak{g}'$ is also almost Hermitian, which means that $A\in O(2)$ and therefore $A^{-1}=A^T$. On the other hand, the relation $A^2=-I$ implies $A^T=-A$. Thus, the representation of $A$ with respect to the orthonormal basis $\{\mathsf{e}_1, \mathsf{e}_2\}$ is of the form
\begin{equation*}
    A=\left(
  \begin{array}{cc}
    0 & a \\
    -a & 0 \\
  \end{array}
\right).
\end{equation*}
Now, using $A^2=-I$, we obtain $a^2=1$, so $a=\pm1$. In particular $A\in SO(2)$.
\end{proof}
\begin{remark}\label{Remark 1-1}
If $J$ is an almost Hermitian structure of type I such that $J\mathsf{e}_1=-\mathsf{e}_2$, then by changing the orientation we can take $J\mathsf{e}_1=\mathsf{e}_2$. Therefore, in the remainder of this article, we consider Type I almost Hermitian structures only of the form $J\mathsf{e}_1=\mathsf{e}_2$.
\end{remark}

\begin{remark}\label{Remark 1-2}
If $J$ is an almost Hermitian structure of type II, then there exist $u_0^1, u_0^2\in\Gamma$ such that $J\mathsf{e}_1 = u_0^1$ and $J\mathsf{e}_2 = u_0^2$. Also, $Ju \in \Gamma \cap (\mathrm{span}\{u_0^1, u_0^2\})^\perp$ for all $u \in \Gamma \cap (\mathrm{span}\{u_0^1, u_0^2\})^\perp$. Thus, for $n = 2k$, there exist $u_1, \dots, u_{k-2} \in \Gamma \cap (\mathrm{span}\{u_0^1, u_0^2\})^\perp$ such that
      \begin{equation*}
        \{\mathsf{e}_1, \mathsf{e}_2, u_0^1, u_0^2, u_1, \dots, u_{k-2}, v_1 = Ju_1, \dots, v_{k-2} = Ju_{k-2}\}
      \end{equation*}
      is an orthonormal basis for $\mathfrak{g}$.
\end{remark}

We now give necessary and sufficient conditions for almost Hermitian structures of Types I and II to be Hermitian. Thus, the integrability of a Type I almost Hermitian structure can be completely expressed in terms of the four vectors $\mathsf{a}_1, \mathsf{a}_2, \mathsf{b}_1, \mathsf{b}_2$ and the two skew-adjoint endomorphisms $f_1, f_2$.

\begin{theorem}\label{theorem 1}
    An almost Hermitian structure of Type I is Hermitian if and only if the following equations hold:
    \begin{align*}
        \mathsf{b}_2 - \mathsf{a}_1 &= J(\mathsf{a}_2 + \mathsf{b}_1), \\
        Jf_1 - f_1J &= Jf_2J + f_2.
    \end{align*}
\end{theorem}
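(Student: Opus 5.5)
The plan is to compute the Nijenhuis tensor \eqref{Nijenhuis tensor} directly on the orthogonal decomposition $\mathfrak g=\mathfrak g'\oplus\Gamma$. By Lemma~\ref{A SO(2)} and Remark~\ref{Remark 1-1} we may take $J\mathsf e_1=\mathsf e_2$ and $J\mathsf e_2=-\mathsf e_1$. Since $J$ is orthogonal and preserves $\mathfrak g'$, it also preserves $\Gamma=\mathfrak g'^\perp$. On $\Gamma$ it is then skew-adjoint, so $\langle x,Ju\rangle=-\langle Jx,u\rangle$ for $x,u\in\Gamma$. Because $N_J$ is skew and bilinear, it suffices to evaluate it on three kinds of pairs: $(\mathsf e_1,\mathsf e_2)$, $(\mathsf e_i,u)$ with $u\in\Gamma$, and $(u,v)$ with $u,v\in\Gamma$. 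Moreover $N_J(JX,Y)=-JN_J(X,Y)$, so $N_J(\mathsf e_2,u)=-JN_J(\mathsf e_1,u)$, and the pair $(\mathsf e_1,u)$ is enough.

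The first step is the pair $(\mathsf e_1,\mathsf e_2)$. Since $\mathfrak g'$ is abelian and $J\mathsf e_1=\mathsf e_2$, $J\mathsf e_2=-\mathsf e_1$, every bracket appearing in $N_J(\mathsf e_1,\mathsf e_2)$ vanishes. Hence $N_J(\mathsf e_1,\mathsf e_2)=0$ automatically.

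The second step is the pair $(\mathsf e_1,u)$. Using $[u,\mathsf e_1]=\langle\mathsf a_1,u\rangle\mathsf e_1+\langle\mathsf a_2,u\rangle\mathsf e_2$, the analogous formula for $\mathsf e_2$, and $Ju\in\Gamma$, we expand
\[
N_J(\mathsf e_1,u)=[\mathsf e_1,u]+J\big([\mathsf e_2,u]+[\mathsf e_1,Ju]\big)-[\mathsf e_2,Ju].
\]
Each bracket lies in $\mathfrak g'$, and $J$ acts there by rotation. Collecting the $\mathsf e_1$-coefficient gives
\[
-\langle\mathsf a_1,u\rangle+\langle\mathsf b_2,u\rangle+\langle\mathsf a_2,Ju\rangle+\langle\mathsf b_1,Ju\rangle .
\]
Moving $J$ across the inner product turns this into $\langle \mathsf b_2-\mathsf a_1-J(\mathsf a_2+\mathsf b_1),u\rangle$. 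Requiring this to vanish for all $u\in\Gamma$ yields exactly $\mathsf b_2-\mathsf a_1=J(\mathsf a_2+\mathsf b_1)$. I expect the $\mathsf e_2$-coefficient to be the same expression evaluated at $Ju$ in place of $u$, giving nothing new; I would verify this explicitly.

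The third step, which I expect to be the main bookkeeping obstacle (sign conventions with skew-adjoint maps and the rotation of $\mathfrak g'$), is the pair $(u,v)$. Here $[u,v]=\langle f_1u,v\rangle\mathsf e_1+\langle f_2u,v\rangle\mathsf e_2$. Applying $J$ swaps the roles of the two components with a sign. Writing each term as $\langle F u,v\rangle$ for an endomorphism $F$ built from $f_1,f_2,J$, using $f_i^T=-f_i$ and $J^T=-J$ on $\Gamma$, the $\mathsf e_1$-component of $N_J(u,v)$ becomes
\[
\big\langle (f_1 - f_1J\cdot\, \text{terms} \dots)u,v\big\rangle ,
\]
and after simplification I expect it to reduce to $\langle (Jf_1-f_1J-Jf_2J-f_2)u,v\rangle$, up to an overall sign and possibly composing with $J$. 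I would expand $f_1(Ju)$ and $\langle f_1u,Jv\rangle=-\langle Jf_1u,v\rangle$ carefully. The $\mathsf e_2$-component should give the same operator identity after replacing $u$ by $Ju$ (multiplying the identity by $J$ on the right and using $J^2=-1$). Since this must hold for all $u,v\in\Gamma$, it is equivalent to $Jf_1-f_1J=Jf_2J+f_2$.

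Combining the three steps, $N_J=0$ holds exactly when both stated equations hold, which proves the claim.
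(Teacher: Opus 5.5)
Your plan is correct and is the same direct Nijenhuis-tensor computation the paper carries out on the pairs $(\mathsf e_1,\mathsf e_2)$, $(\mathsf e_i,u)$ and $(u,v)$; your use of $N_J(JX,Y)=-JN_J(X,Y)$ to skip the $(\mathsf e_2,u)$ case is a harmless shortcut. The step you left as an expectation does close: with $P=Jf_1-f_1J-Jf_2J-f_2$ one gets $N_J(u,v)=\langle P(Ju),v\rangle\mathsf e_1-\langle Pu,v\rangle\mathsf e_2$, so the $Ju$ appears in the $\mathsf e_1$-component rather than the $\mathsf e_2$-component, and likewise the $\mathsf e_2$-coefficient of $N_J(\mathsf e_1,u)$ is indeed your $\mathsf e_1$-expression evaluated at $Ju$.
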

\begin{proof}
    A direct computation shows that for any $u, v \in \Gamma$:
    \begin{align*}
        N_J(\mathsf{e}_1, \mathsf{e}_2) &= 0, \\
        N_J(\mathsf{e}_1, u) &= \langle \mathsf{b}_2 - \mathsf{a}_1 - J(\mathsf{a}_2 + \mathsf{b}_1), u \rangle \mathsf{e}_1 + \langle J(\mathsf{a}_1 - \mathsf{b}_2) - \mathsf{a}_2 - \mathsf{b}_1, u \rangle \mathsf{e}_2, \\
        N_J(\mathsf{e}_2, u) &= \langle J(\mathsf{a}_1 - \mathsf{b}_2) - \mathsf{b}_1 - \mathsf{a}_2, u \rangle \mathsf{e}_1 + \langle \mathsf{a}_1 - \mathsf{b}_2 + J(\mathsf{b}_1 + \mathsf{a}_2), u \rangle \mathsf{e}_2, \\
        N_J(u, v) &= \langle f_1(u) - f_2(Ju) + J(f_2(u) + f_1(Ju)), v \rangle \mathsf{e}_1 \\
        &\quad + \langle f_2(u) + f_1(Ju) + J(f_2(Ju) - f_1(u)), v \rangle \mathsf{e}_2,
    \end{align*}
    which completes the proof.
\end{proof}

\begin{remark}\label{remark 2}
    For an almost Hermitian structure of Type I, if $(J, g)$ is Hermitian, then the first equation of Theorem~\ref{theorem 1} implies $\langle \mathsf{a}_1, \mathsf{a}_2 \rangle - \langle \mathsf{b}_1, \mathsf{b}_2 \rangle = \langle \mathsf{a}_2, \mathsf{b}_2 \rangle - \langle \mathsf{a}_1, \mathsf{b}_1 \rangle$. This relation can be used as a criterion to show that $(J, g)$ is not Hermitian.
\end{remark}

An almost complex structure $J$ is called abelian if $[Jx, Jy] = [x, y]$ for all $x, y \in \mathfrak{g}$.
\begin{remark}\label{remark 3}
    Any abelian almost Hermitian structure is Hermitian \cite{Barberis-Dotti}. It is easily verified that an almost Hermitian structure of Type I is abelian if and only if $J\mathsf{a}_1 = \mathsf{b}_1$, $J\mathsf{a}_2 = \mathsf{b}_2$, $Jf_1 = f_1J$, and $Jf_2 = f_2J$.
\end{remark}

\begin{remark}\label{Remark 1-3}
    If $\dim{\frak{g}}=4$, then the equation $Jf_1-f_1J=Jf_2J+f_2$ holds automatically. In this case, with respect to a suitable basis for $\Gamma$, the linear map $J|_\Gamma:\Gamma\rightarrow\Gamma$ has the following representation:
    \begin{equation*}
    J|_\Gamma=\left(
  \begin{array}{cc}
    0 & -1 \\
    1 & 0 \\
  \end{array}
\right).
\end{equation*}
On the other hand, the space of all skew-symmetric linear maps from $\Gamma$ to itself is one-dimensional. Since $f_1$ and $f_2$ are skew-symmetric, they are real multiples of $J|_\Gamma$. Therefore, $Jf_1-f_1J=Jf_2J+f_2=0$. Hence, if $\dim{\frak{g}}=4$, then an almost Hermitian structure $J$ of type I is Hermitian if and only if $\mathsf{b}_2 - \mathsf{a}_1 = J(\mathsf{a}_2 + \mathsf{b}_1)$.
\end{remark}
For Type II structures, we obtain the stronger phenomenon that integrability is equivalent to the abelianity of the complex structure.
\begin{theorem}\label{theorem 4}
    Let $(J, g)$ be a left-invariant almost Hermitian structure of Type II. Then the following statements are equivalent:
    \begin{enumerate}
        \item $(J, g)$ is a Hermitian structure.
        \item $J$ is an abelian structure.
        \item The following equations hold:
        \begin{align*}
            \tilde{\mathsf{a}}_1 &= Jf_1(u_0^1), &
            \tilde{\mathsf{a}}_2 &= Jf_2(u_0^1), \\
            \tilde{\mathsf{b}}_1 &= Jf_1(u_0^2), &
            \tilde{\mathsf{b}}_2 &= Jf_2(u_0^2), \\
            \langle \mathsf{b}_1, u_0^1 \rangle &= \langle \mathsf{a}_1, u_0^2 \rangle, &
            \langle \mathsf{b}_2, u_0^1 \rangle &= \langle \mathsf{a}_2, u_0^2 \rangle, \\
            \langle f_1(Ju) - Jf_1(u), v \rangle &= 0, &
            \langle f_2(Ju) - Jf_2(u), v \rangle &= 0, \quad \forall u, v \in \Gamma \cap (\mathrm{span}\{u_0^1, u_0^2\})^\perp,
        \end{align*}
        where for any $x \in \Gamma$, $\tilde{x}$ denotes the orthogonal projection of $x$ onto $(\mathrm{span}\{u_0^1, u_0^2\})^\perp$.
    \end{enumerate}
\end{theorem}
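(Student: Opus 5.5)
The plan is to prove $(2)\Rightarrow(1)$, $(1)\Rightarrow(3)$ and $(3)\Rightarrow(2)$. The first is Remark~\ref{remark 3}: an abelian almost Hermitian structure is Hermitian \cite{Barberis-Dotti}. For the other two we work in the orthonormal basis of Remark~\ref{Remark 1-2}, with $\Gamma=\mathrm{span}\{u_0^1,u_0^2\}\oplus W$ and $W:=\Gamma\cap(\mathrm{span}\{u_0^1,u_0^2\})^\perp$. Here $J\mathsf{e}_i=u_0^i$, $Ju_0^i=-\mathsf{e}_i$, and $J(W)=W$. A key feature of Type II is that $J(\mathfrak g')\cap\mathfrak g'=0$ and $J(\mathfrak g')\subseteq\Gamma$. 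Since every bracket lies in $\mathfrak g'$, $J[\cdot,\cdot]$ is always in $\mathrm{span}\{u_0^1,u_0^2\}\subseteq\Gamma$.

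Write the Nijenhuis tensor as $N_J(x,y)=\big([x,y]-[Jx,Jy]\big)+J\big([Jx,y]+[x,Jy]\big)$. The first bracket lies in $\mathfrak g'$ and the second lies in $J(\mathfrak g')\subseteq\Gamma$. These subspaces meet only in $0$, so $N_J=0$ forces $[x,y]=[Jx,Jy]$ for all $x,y$, which is abelianity. So $(1)\Rightarrow(2)$ needs no computation. To get $(1)\Rightarrow(3)$ it therefore suffices to show that $(2)$ is equivalent to $(3)$, and then $(3)\Rightarrow(2)$ closes the cycle.

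To show that $(2)$ is equivalent to $(3)$, I would expand $[Jx,Jy]-[x,y]=0$ on pairs of basis vectors, using the bracket data $\mathsf a_i,\mathsf b_i,f_i$.
\begin{itemize}
\item For $(\mathsf e_1,\mathsf e_2)$: $[u_0^1,u_0^2]=0$ gives $\langle f_1(u_0^1),u_0^2\rangle=\langle f_2(u_0^1),u_0^2\rangle=0$.
\item For $(\mathsf e_i,u_0^j)$: $[u_0^i,-\mathsf e_j]=[\mathsf e_i,u_0^j]$ gives relations such as $[u_0^1,\mathsf e_1]=[u_0^1,\mathsf e_1]$ (trivial), $[u_0^1,\mathsf e_2]=[u_0^2,\mathsf e_1]$, which yields $\langle\mathsf b_1,u_0^1\rangle=\langle\mathsf a_1,u_0^2\rangle$ and $\langle\mathsf b_2,u_0^1\rangle=\langle\mathsf a_2,u_0^2\rangle$, and the pairs involving $u_0^i$ against $u_0^j$.
\item For $(\mathsf e_i,w)$ with $w\in W$: $[u_0^i,Jw]=-[w,\mathsf e_i]$. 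Expanding, $\langle f_k(u_0^1),Jw\rangle=\langle \mathsf a_k,w\rangle$ for $k=1,2$, and similarly with $u_0^2$ and $\mathsf b_k$. Since $J$ is skew, $\langle f_k(u_0^1),Jw\rangle=-\langle Jf_k(u_0^1),w\rangle$, so these give the projections $\tilde{\mathsf a}_k,\tilde{\mathsf b}_k$ in terms of $Jf_k(u_0^i)$. Signs have to be tracked, and the convention $[u,\mathsf e_1]$ versus $[\mathsf e_1,u]$ must be respected.
\item For $(u_0^i,w)$: these reproduce the same equations. Using $J^2=-1$, the pair $(Jx,Jy)$ gives back $(x,y)$ up to sign, so they are redundant.
\item For $(w,w')$ in $W$: we get $\langle f_k(Jw),Jw'\rangle=\langle f_k(w),w'\rangle$, which by skewness of $J$ is equivalent to $\langle f_k(Jw)-Jf_k(w),w'\rangle=0$ for all $w'\in W$, i.e.\ the stated condition after replacing $w'$ by $Jw'$.
\end{itemize}

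The main obstacle is bookkeeping. We must check that the $\mathrm{span}\{u_0^1,u_0^2\}$-components of $\mathsf a_k,\mathsf b_k$, together with the conditions $\langle f_k(u_0^1),u_0^2\rangle=0$, are either implied by the listed equations or are exactly the two stated scalar identities. I expect the $\mathrm{span}\{u_0^i\}$-components of $Jf_k(u_0^i)$ to vanish automatically. The reason is that $Jf_k(u_0^i)$ has $\mathfrak g'$-part $-\langle f_k(u_0^i),u_0^j\rangle\mathsf e_j$, and the equation $\tilde{\mathsf a}_k=Jf_k(u_0^1)$ then forces this part to vanish as well, because $\tilde{\mathsf a}_k\in W$. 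With that observation, and with the redundancies under $J$ listed above, the set of independent equations reduces exactly to the list in $(3)$.
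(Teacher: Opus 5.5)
Your proposal is correct, and it reaches $(1)\Leftrightarrow(2)$ by a different route from the paper. The paper computes $N_J$ on all pairs from $\{\mathsf e_1,\mathsf e_2,u_0^1,u_0^2,u,v\}$ and obtains ten scalar equations. It then checks by a second computation that the abelian condition gives the same ten equations, and finally rewrites the first six as the vector identities of (3).

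You instead split $N_J(x,y)=([x,y]-[Jx,Jy])+J([Jx,y]+[x,Jy])$. The first term lies in $\mathfrak g'$ and the second in $J\mathfrak g'\subseteq\Gamma=(\mathfrak g')^{\perp}$, so $N_J=0$ already forces abelianity. This removes one of the two computations and needs no metric. It also shows that integrability and abelianity coincide whenever $J\mathfrak g'\cap\mathfrak g'=0$. That explains why the phenomenon is special to Type II: for Type I, $J\mathfrak g'=\mathfrak g'$ and the splitting is unavailable.

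Your $(2)\Leftrightarrow(3)$ step is the paper's computation restricted to the abelian condition. You correctly note that $Jf_k(u_0^i)$ has no $\mathrm{span}\{u_0^1,u_0^2\}$-part and has $\mathfrak g'$-part $-\langle f_k(u_0^i),u_0^j\rangle\mathsf e_j$. Hence the vector equations $\tilde{\mathsf a}_k=Jf_k(u_0^1)$ and $\tilde{\mathsf b}_k=Jf_k(u_0^2)$ encode both the $(\mathsf e_1,\mathsf e_2)$ conditions and the $W$-components. This matches the paper's rewriting $f_1(u_0^1)=-J\mathsf a_1-\langle\mathsf a_1,u_0^1\rangle\mathsf e_1-\langle\mathsf a_1,u_0^2\rangle\mathsf e_2$.

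There is one sign slip. From $[u_0^1,Jw]=[\mathsf e_1,w]=-[w,\mathsf e_1]$ you get $\langle f_k(u_0^1),Jw\rangle=-\langle\mathsf a_k,w\rangle$, not $+\langle\mathsf a_k,w\rangle$. Combined with $\langle f_k(u_0^1),Jw\rangle=-\langle Jf_k(u_0^1),w\rangle$, this gives exactly that $\tilde{\mathsf a}_k$ equals the $W$-part of $Jf_k(u_0^1)$, as stated in (3). With your sign you would get the opposite sign.
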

\begin{proof}
    The structure $(J, g)$ is Hermitian if and only if $N_J(X, Y) = 0$ for all $X, Y \in \mathfrak{g}$. A direct computation for $X, Y \in \{\mathsf{e}_1, \mathsf{e}_2, u_0^1, u_0^2, u, v\}$ shows that $N_J(X, Y) = 0$ is equivalent to the following ten equations:
    \begin{align}
        \langle f_1(u_0^1), u_0^2 \rangle &= 0, &
        \langle f_2(u_0^1), u_0^2 \rangle &= 0, \nonumber \\
        \langle f_1(u_0^1) + J\mathsf{a}_1, u \rangle &= 0, &
        \langle f_2(u_0^1) + J\mathsf{a}_2, u \rangle &= 0, \nonumber \\
        \langle f_1(u_0^2) + J\mathsf{b}_1, u \rangle &= 0, &
        \langle f_2(u_0^2) + J\mathsf{b}_2, u \rangle &= 0, \label{eq:ten_equations} \\
        \langle \mathsf{b}_1, u_0^1 \rangle &= \langle \mathsf{a}_1, u_0^2 \rangle, &
        \langle \mathsf{b}_2, u_0^1 \rangle &= \langle \mathsf{a}_2, u_0^2 \rangle, \nonumber \\
        \langle f_1(Ju) - Jf_1(u), v \rangle &= 0, &
        \langle f_2(Ju) - Jf_2(u), v \rangle &= 0. \nonumber
    \end{align}
    Another direct computation shows that $J$ is abelian if and only if the same ten equations hold. Thus, statements (1) and (2) are equivalent.

    We now prove that equations \eqref{eq:ten_equations} are equivalent to statement (3). Note that the maps $f_1$ and $f_2$ are skew-adjoint, and
    \begin{align*}
        \langle J\mathsf{a}_1, u_0^1 \rangle &= \langle J\mathsf{a}_1, u_0^2 \rangle = \langle J\mathsf{a}_2, u_0^1 \rangle = \langle J\mathsf{a}_2, u_0^2 \rangle = 0, \\
        \langle J\mathsf{b}_1, u_0^1 \rangle &= \langle J\mathsf{b}_1, u_0^2 \rangle = \langle J\mathsf{b}_2, u_0^1 \rangle = \langle J\mathsf{b}_2, u_0^2 \rangle = 0.
    \end{align*}
    Therefore, the first six equations of \eqref{eq:ten_equations} hold if and only if:
    \begin{align*}
        f_1(u_0^1) &= -J\mathsf{a}_1 - \langle \mathsf{a}_1, u_0^1 \rangle \mathsf{e}_1 - \langle \mathsf{a}_1, u_0^2 \rangle \mathsf{e}_2, \\
        f_2(u_0^1) &= -J\mathsf{a}_2 - \langle \mathsf{a}_2, u_0^1 \rangle \mathsf{e}_1 - \langle \mathsf{a}_2, u_0^2 \rangle \mathsf{e}_2, \\
        f_1(u_0^2) &= -J\mathsf{b}_1 - \langle \mathsf{b}_1, u_0^1 \rangle \mathsf{e}_1 - \langle \mathsf{b}_1, u_0^2 \rangle \mathsf{e}_2, \\
        f_2(u_0^2) &= -J\mathsf{b}_2 - \langle \mathsf{b}_2, u_0^1 \rangle \mathsf{e}_1 - \langle \mathsf{b}_2, u_0^2 \rangle \mathsf{e}_2,
    \end{align*}
    which completes the proof.
\end{proof}


\section{K\"ahler structures}\label{Kahler structures section}
This section characterizes K\"ahler structures of Types I and II. In the proofs of Theorems \ref{theorem 5} and \ref{theorem 7}, although it might be somewhat simpler to compute the K\"ahler  form and find the required closure conditions, we preferred to use a more involved method in order to obtain the Bismut connections simultaneously.

\begin{theorem}\label{theorem 5}
    A Hermitian structure of Type I is K\"ahler if and only if $f_1 = f_2 = 0$ and $\mathsf{b}_2 = -\mathsf{a}_1$.
\end{theorem}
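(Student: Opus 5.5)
The plan is to use the characterization of the K\"ahler condition as $\nabla J=0$, where $\nabla$ is the Levi-Civita connection. I will compute $(\nabla_XJ)Y=\nabla_X(JY)-J\nabla_XY$ on pairs from $\{\mathsf e_1,\mathsf e_2\}\cup\Gamma$, using the explicit formulas for $\nabla$ recalled in Section~\ref{Hermitian structures section}. By Remark~\ref{Remark 1-1} I take $J\mathsf e_1=\mathsf e_2$, so $J\mathsf e_2=-\mathsf e_1$ and $J(\Gamma)=\Gamma$. Since $(J,g)$ is assumed Hermitian, Theorem~\ref{theorem 1} gives $\mathsf b_2-\mathsf a_1=J(\mathsf a_2+\mathsf b_1)$, and I may use this throughout.

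\emph{Necessity.} First I compute the components with both arguments in $\mathfrak g'$:
\begin{align*}
(\nabla_{\mathsf e_1}J)\mathsf e_1&=\tfrac12(\mathsf a_2+\mathsf b_1)-J\mathsf a_1,\\
(\nabla_{\mathsf e_2}J)\mathsf e_2&=-\tfrac12(\mathsf a_2+\mathsf b_1)-J\mathsf b_2.
\end{align*}
If both vanish, then $J\mathsf a_1=-J\mathsf b_2$, hence $\mathsf b_2=-\mathsf a_1$; the equation of Theorem~\ref{theorem 1} is then consistent, since it reads $\mathsf a_2+\mathsf b_1=2J\mathsf a_1$.

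Next, for $v\in\Gamma$, the formulas for $\nabla_v\mathsf e_i$ give
$$(\nabla_vJ)\mathsf e_1=\tfrac12\bigl(Jf_1(v)-f_2(v)\bigr),$$
because the $\mathfrak g'$-components cancel. Hence $f_2=Jf_1$.

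Then, for $u\in\Gamma$, I take the $\Gamma$-component of $(\nabla_{\mathsf e_1}J)u$, which is $-\tfrac12\bigl(f_1J-Jf_1\bigr)u$. Thus $f_1$ commutes with $J|_\Gamma$.

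The key (and, I expect, only non-mechanical) step is to combine these facts with skew-adjointness. Since $f_1$ and $J|_\Gamma$ are skew-adjoint and commute,
$$(Jf_1)^T=f_1^TJ^T=(-f_1)(-J)=f_1J=Jf_1,$$
so $f_2=Jf_1$ is symmetric. But $f_2$ is also skew-adjoint, so $f_2=0$, and then $f_1=-Jf_2=0$. I do not expect to need the Jacobi identity here.

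\emph{Sufficiency.} Assume $f_1=f_2=0$ and $\mathsf b_2=-\mathsf a_1$. Then Theorem~\ref{theorem 1} gives $\mathsf a_2+\mathsf b_1=2J\mathsf a_1$, and I check that every component of $\nabla J$ vanishes. The components $(\nabla_{\mathsf e_i}J)\mathsf e_j$ and $(\nabla_vJ)\mathsf e_j$ vanish by the computations above. For $(\nabla_vJ)u$, every term carries a factor $f_1$ or $f_2$, so it vanishes. For $(\nabla_{\mathsf e_i}J)u$, the $\mathfrak g'$-components reduce to expressions such as $-\langle\mathsf a_1,Ju\rangle-\tfrac12\langle\mathsf a_2+\mathsf b_1,u\rangle$; these vanish using $\mathsf a_2+\mathsf b_1=2J\mathsf a_1$ and the skew-adjointness of $J$ on $\Gamma$.

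Alternatively, I could confirm the result by checking $d\omega=0$ directly. As announced at the start of the section, however, the $\nabla J$ route has the advantage of producing the data needed for the Bismut connection via \eqref{Bismut-Levi-Civita equation}. The main bookkeeping obstacle is tracking signs in the mixed terms $(\nabla_{\mathsf e_i}J)u$; the main conceptual point is the symmetric-and-skew argument that forces $f_1=f_2=0$.
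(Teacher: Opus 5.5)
Your proof is correct, but it takes a different route from the paper's.

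\textbf{The paper's route.} The paper does not use $\nabla J$. It computes the torsion $3$-form $c(X,Y,Z)=d\omega(JX,JY,JZ)$ directly and finds
$c(\mathsf e_1,\mathsf e_2,u)=-\langle \mathsf a_1+\mathsf b_2,Ju\rangle$, $c(\mathsf e_i,u,v)=\langle Ju,f_i(Jv)\rangle$ and $c(u,v,w)=0$.
It then builds $\nabla^B$ from \eqref{Bismut-Levi-Civita equation}, computes its torsion $T^B$, and reads off that $T^B=0$ exactly when $\mathsf b_2=-\mathsf a_1$ and $f_1=f_2=0$. What this buys:
\begin{itemize}
\item The vanishing of $f_1,f_2$ is immediate from $c(\mathsf e_i,u,v)=0$ and the invertibility of $J$ on $\Gamma$. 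No analogue of your commuting-skew-adjoint argument is needed.
\item The explicit Bismut connection and torsion come out as a byproduct, and the examples reuse them.
\item Closedness $d\omega=0$ alone is already equivalent to the two conditions, so integrability plays no role in that computation.
\end{itemize}

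\textbf{Your route.} You stay with the Levi-Civita formulas of Section~2 and bypass the Bismut machinery entirely. The cost is that $\nabla J=0$ encodes both $d\omega=0$ and $N_J=0$, which shows up in two places:
\begin{itemize}
\item Necessity only yields $f_2=Jf_1$ and $f_1J=Jf_1$, so your symmetric-and-skew step is required to conclude $f_1=f_2=0$.
\item Sufficiency genuinely needs the relation $\mathsf a_2+\mathsf b_1=2J\mathsf a_1$ from Theorem~\ref{theorem 1}, which you correctly supply. Without integrability, $f_1=f_2=0$ and $\mathsf b_2=-\mathsf a_1$ give $d\omega=0$ but not $\nabla J=0$.
\end{itemize}

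\textbf{A correction to your closing remark.} Formula \eqref{Bismut-Levi-Civita equation} needs $c$, not $\nabla J$. The ``more involved method'' the paper refers to is the computation of $c$ and $T^B$, not a $\nabla J$ computation. You could still recover $d\omega$, and hence $c$, from your data by cyclically summing $g((\nabla_XJ)Y,Z)$.
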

\begin{proof}
    For every $u, v, w \in \Gamma$, formula \eqref{relation c and omega} gives:
    \begin{align*}
        c(\mathsf{e}_1, \mathsf{e}_2, u) &= -\langle \mathsf{a}_1 + \mathsf{b}_2, Ju \rangle, &
        c(\mathsf{e}_1, u, v) &= \langle Ju, f_1(Jv) \rangle, \\
        c(\mathsf{e}_2, u, v) &= \langle Ju, f_2(Jv) \rangle, &
        c(u, v, w) &= 0.
    \end{align*}
    Using equation \eqref{Bismut-Levi-Civita equation}, the Bismut connection is given by:
    \begin{align*}
        & \nabla^B_{\mathsf{e}_1} \mathsf{e}_1 = \mathsf{a}_1, \ \ \ \ \ \nabla^B_{\mathsf{e}_1} \mathsf{e}_2 = J\mathsf{a}_1, \\
        & \nabla^B_{\mathsf{e}_1} u = -\langle \mathsf{a}_1, u \rangle \mathsf{e}_1 - \langle J\mathsf{a}_1, u \rangle \mathsf{e}_2 - f_1(u) + \tfrac{1}{2}(f_2J - Jf_2)(u), \\
        & \nabla^B_{\mathsf{e}_2} \mathsf{e}_1 = -J\mathsf{b}_2, \ \ \ \ \ \nabla^B_{\mathsf{e}_2} \mathsf{e}_2 = \mathsf{b}_2, \\
        & \nabla^B_{\mathsf{e}_2} u = \langle J\mathsf{b}_2, u \rangle \mathsf{e}_1 - \langle \mathsf{b}_2, u \rangle \mathsf{e}_2 - f_2(u) + \tfrac{1}{2}(Jf_1 - f_1J)(u), \\
        & \nabla^B_v \mathsf{e}_1 = \langle J\mathsf{a}_1 - \mathsf{b}_1, v \rangle \mathsf{e}_2 - \tfrac{1}{2}(f_2J - Jf_2)(v), \\
        & \nabla^B_v \mathsf{e}_2 = -\langle \mathsf{a}_2 + J\mathsf{b}_2, v \rangle \mathsf{e}_1 - \tfrac{1}{2}(Jf_1 - f_1J)(v), \\
        & \nabla^B_v u = \tfrac{1}{2} \langle (f_2J - Jf_2)(v), u \rangle \mathsf{e}_1 + \tfrac{1}{2} \langle (Jf_1 - f_1J)(v), u \rangle \mathsf{e}_2.
    \end{align*}
    The torsion tensor of the Bismut connection is then:
    \begin{align*}
        T^B(\mathsf{e}_1, \mathsf{e}_2) &= J(\mathsf{a}_1 + \mathsf{b}_2), \\
        T^B(\mathsf{e}_1, u) &= -\langle J(\mathsf{a}_1 + \mathsf{b}_2), u \rangle \mathsf{e}_2 + Jf_1(Ju), \\
        T^B(\mathsf{e}_2, u) &= \langle J(\mathsf{a}_1 + \mathsf{b}_2), u \rangle \mathsf{e}_1 + Jf_2(Ju), \\
        T^B(u, v) &= \langle Jf_1(Ju), v \rangle \mathsf{e}_1 + \langle Jf_2(Ju), v \rangle \mathsf{e}_2.
    \end{align*}
    Since $(J, g)$ is K\"ahler if and only if $T^B = 0$, it follows that the structure is K\"ahler precisely when $f_1 = f_2 = 0$ and $\mathsf{b}_2 = -\mathsf{a}_1$.
\end{proof}

\begin{cor}\label{corollary 6}
    There are no abelian K\"ahler structures of Type I.
\end{cor}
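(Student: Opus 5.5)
Suppose, for contradiction, that $(J,g)$ is an abelian K\"ahler structure of Type I. I will combine the algebraic conditions of Remark~\ref{remark 3} with those of Theorem~\ref{theorem 5} and show that together they force $\mathfrak g'=0$. This contradicts $\dim\mathfrak g'=2$.

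By Theorem~\ref{theorem 5}, the K\"ahler condition gives $f_1=f_2=0$ and $\mathsf b_2=-\mathsf a_1$. Since an abelian structure is Hermitian, the first equation of Theorem~\ref{theorem 1} also holds: $\mathsf b_2-\mathsf a_1=J(\mathsf a_2+\mathsf b_1)$. By Remark~\ref{remark 3}, abelianity gives $J\mathsf a_1=\mathsf b_1$ and $J\mathsf a_2=\mathsf b_2$.

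Now substitute. We have $\mathsf b_2=J\mathsf a_2=-\mathsf a_1$, so $\mathsf a_2=J\mathsf a_1=\mathsf b_1$. Next, $\mathsf b_2-\mathsf a_1=-2\mathsf a_1$, while $J(\mathsf a_2+\mathsf b_1)=2J\mathsf b_1=2J^2\mathsf a_1=-2\mathsf a_1$. So the Hermitian equation holds identically and gives no new information. The remaining constraint must come from elsewhere, and I expect it to be the Jacobi identity. The paper noted that the Jacobi identity (with Cauchy--Schwarz) forces $\mathsf b_1$ to be a scalar multiple of $\mathsf a_2$, which is not enough on its own.

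Since $f_1=f_2=0$, the subspace $\Gamma$ is an abelian subalgebra. Hence $\operatorname{ad}_u|_{\mathfrak g'}$ and $\operatorname{ad}_v|_{\mathfrak g'}$ commute for all $u,v\in\Gamma$. With respect to the basis $\{\mathsf e_1,\mathsf e_2\}$ (columns giving images), the matrix of $\operatorname{ad}_u|_{\mathfrak g'}$ is
\begin{equation*}
M_u=\left(\begin{array}{cc}\langle\mathsf a_1,u\rangle & \langle\mathsf b_1,u\rangle\\ \langle\mathsf a_2,u\rangle & \langle\mathsf b_2,u\rangle\end{array}\right)=\left(\begin{array}{cc}\alpha(u) & \beta(u)\\ \beta(u) & -\alpha(u)\end{array}\right),
\end{equation*}
where $\alpha(u)=\langle\mathsf a_1,u\rangle$ and $\beta(u)=\langle J\mathsf a_1,u\rangle$. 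A direct check shows that $[M_u,M_v]$ vanishes exactly when $\alpha(u)\beta(v)-\beta(u)\alpha(v)=0$. Now take $u=\mathsf a_1$ and $v=J\mathsf a_1$. Because $J|_\Gamma$ is orthogonal and $\langle J\mathsf a_1,\mathsf a_1\rangle=0$, this quantity equals $\|\mathsf a_1\|^4$. Therefore $\mathsf a_1=0$, and then $\mathsf b_1=\mathsf a_2=\mathsf b_2=0$ as well.

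With all of $\mathsf a_i,\mathsf b_i,f_i$ equal to zero, every bracket vanishes. So $\mathfrak g'=0$, which contradicts $\dim\mathfrak g'=2$.

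The main obstacle is the step that extracts the contradiction from the Jacobi identity, namely the commutation of the matrices $M_u$. I would first verify the sign and index conventions in $M_u$ carefully, since the whole argument rests on that commutator computation.
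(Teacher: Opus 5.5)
Your argument is correct and is essentially the paper's proof. Theorem~\ref{theorem 5} together with Remark~\ref{remark 3} gives $\mathsf a_2=\mathsf b_1=J\mathsf a_1$ and $\mathsf b_2=-\mathsf a_1$. The Jacobi identity on triples $(\mathsf e_i,u,v)$ with $[u,v]=0$ is exactly your commutation of $\operatorname{ad}_u|_{\mathfrak g'}$ and $\operatorname{ad}_v|_{\mathfrak g'}$, and it yields $\langle\mathsf a_1,u\rangle\langle J\mathsf a_1,v\rangle=\langle J\mathsf a_1,u\rangle\langle\mathsf a_1,v\rangle$. Orthogonality of $\mathsf a_1$ and $J\mathsf a_1$ then forces $\mathsf a_1=0$.

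The paper concludes this last step by saying that $\mathsf a_1$ and $J\mathsf a_1$ are linearly dependent yet orthogonal; your explicit choice $u=\mathsf a_1$, $v=J\mathsf a_1$ says the same thing. Your conventions for $M_u$ are correct. In any case, whether a commutator vanishes does not change if both matrices are transposed, so the row/column convention cannot affect the conclusion.
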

\begin{proof}
Using the Jacobi identity for the triple $(\mathsf{e}_1,u,v)$ together with $f_1 = f_2 = 0$ and $\mathsf{b}_2 = -\mathsf{a}_1$, we obtain $\langle \mathsf{a}_2, u \rangle \langle \mathsf{a}_1, v \rangle = \langle \mathsf{a}_1, u \rangle \langle \mathsf{a}_2, v \rangle,$ for all $u,v\in\Gamma$. Thus $\mathsf{a}_1$ and $\mathsf{a}_2$ are linearly dependent. Since $\mathsf{a}_2=J\mathsf{a}_1$ and $J$ is orthogonal with $J^2=-I$, we have
$\langle \mathsf{a}_1,J\mathsf{a}_1\rangle=0$. Consequently, $\mathsf{a}_1=0$. It follows that $\mathsf{a}_2=\mathsf{b}_1=\mathsf{b}_2=0$. Together with $f_1=f_2=0$, all the Lie brackets vanish, so $\frak{g}'=0$, contrary to the assumption that $\dim\frak{g}'=2$. Therefore, there are no abelian K\"ahler structures of Type I.
\end{proof}

\begin{theorem}\label{theorem 7}
    A Hermitian structure of Type II is K\"ahler if and only if $f_1 = f_2 = 0$ and $\mathsf{b}_1 = \mathsf{a}_2$.
\end{theorem}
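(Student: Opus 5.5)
I will argue as in the proof of Theorem~\ref{theorem 5}. A Hermitian structure is K\"ahler if and only if $d\omega=0$. By \eqref{relation c and omega}, this holds if and only if $c=0$, equivalently $T^B=0$, since $J$ is invertible. So the plan is to compute $c$ on the adapted orthonormal basis $\{\mathsf{e}_1,\mathsf{e}_2,u_0^1,u_0^2,u_1,\dots,u_{k-2},v_1,\dots,v_{k-2}\}$ of Remark~\ref{Remark 1-2}. Then I will insert $c$ into \eqref{Bismut-Levi-Civita equation}, together with the Levi-Civita formulas, to record $\nabla^B$ and $T^B$, and read off when $T^B$ vanishes.

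\textbf{Computing $d\omega$.} For left-invariant forms, $d\omega(X,Y,Z)=-\omega([X,Y],Z)-\omega([Y,Z],X)-\omega([Z,X],Y)$ with $\omega(X,Y)=\langle JX,Y\rangle$. All brackets lie in $\mathfrak g'$, and $\omega(\mathsf e_i,\mathsf e_j)=0$ because $J\mathsf e_i\in\Gamma$. Hence only a few components survive:
\begin{align*}
d\omega(\mathsf e_1,\mathsf e_2,x)&=0,\\
d\omega(\mathsf e_1,x,y)&=\langle \mathsf a_1,x\rangle\langle u_0^1,y\rangle+\langle \mathsf a_2,x\rangle\langle u_0^2,y\rangle-(x\leftrightarrow y),\\
d\omega(x,y,z)&=-\mathfrak S_{x,y,z}\big(\langle f_1x,y\rangle\langle u_0^1,z\rangle+\langle f_2x,y\rangle\langle u_0^2,z\rangle\big),
\end{align*}
for $x,y,z\in\Gamma$. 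The component $d\omega(\mathsf e_2,x,y)$ is analogous with $\mathsf b_1,\mathsf b_2$. Since $J$ is Hermitian, Theorem~\ref{theorem 4} shows that the ten equations \eqref{eq:ten_equations} hold. In particular, $\tilde{\mathsf a}_i=Jf_i(u_0^1)$, $\tilde{\mathsf b}_i=Jf_i(u_0^2)$, and $f_1,f_2$ commute with $J$ on $\Gamma\cap(\mathrm{span}\{u_0^1,u_0^2\})^\perp$.

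\textbf{Necessity.} I evaluate on the basis vectors. The component $d\omega(u_0^1,u,v)=0$ with $u,v\perp u_0^1,u_0^2$ gives $\langle f_1u,v\rangle=0$; similarly $u_0^2$ gives $\langle f_2u,v\rangle=0$. The components $d\omega(\mathsf e_i,u_0^j,u)=0$ force the $\tilde{\ }$-parts of $\mathsf a_i,\mathsf b_i$ to vanish. Hence $f_i(u_0^j)$ has no component in the complement, by the equations of Theorem~\ref{theorem 4}(3). Together with $\langle f_i(u_0^1),u_0^2\rangle=0$ and skew-adjointness, this yields $f_1=f_2=0$. The remaining components $d\omega(\mathsf e_i,u_0^1,u_0^2)=0$ give $\langle \mathsf a_1,u_0^2\rangle=\langle\mathsf a_2,u_0^1\rangle$ and the analogous identity for $\mathsf b$. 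Combined with $\langle \mathsf b_1,u_0^1\rangle=\langle \mathsf a_1,u_0^2\rangle$ and $\langle \mathsf b_2,u_0^1\rangle=\langle \mathsf a_2,u_0^2\rangle$, these should force $\mathsf b_1=\mathsf a_2$, since all four vectors now lie in $\mathrm{span}\{u_0^1,u_0^2\}$.

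\textbf{Sufficiency and the main obstacle.} For the converse, with $f_1=f_2=0$ the Hermitian conditions give $\tilde{\mathsf a}_i=\tilde{\mathsf b}_i=0$. With $\mathsf b_1=\mathsf a_2$, every listed component of $d\omega$ reduces to an identity. The main obstacle is the bookkeeping in the previous step: extracting exactly $\mathsf b_1=\mathsf a_2$ from the coefficient identities in the span of $u_0^1,u_0^2$. I expect this to need both Hermitian conditions and all four $d\omega(\mathsf e_i,u_0^j,u_0^k)$-type equations simultaneously. If a coefficient is not determined this way, I would fall back on the Jacobi identity for $(\mathsf e_1,u_0^1,u_0^2)$ with $f_i=0$. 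Finally, I will present the result through the explicit $\nabla^B$ and $T^B$, as announced at the start of Section~\ref{Kahler structures section}, and note that $T^B=0$ is equivalent to the stated conditions.
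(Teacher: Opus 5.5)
Your argument is correct, but it takes a more direct route than the paper. The paper computes $c=d\omega(J\cdot,J\cdot,J\cdot)$ on the adapted basis and inserts it into \eqref{Bismut-Levi-Civita equation} to obtain $\nabla^B$. It then forms $T^B$ and reads off when $T^B=0$. The integrability relations of Theorem~\ref{theorem 4} are used silently to simplify that table; for example, $T^B(\mathsf e_1,\mathsf e_2)=\mathsf b_1-\mathsf a_2$ already absorbs $\langle\mathsf b_1,u_0^1\rangle=\langle\mathsf a_1,u_0^2\rangle$ and $\langle\mathsf b_2,u_0^1\rangle=\langle\mathsf a_2,u_0^2\rangle$.

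You instead impose $d\omega=0$ directly. This is the simpler alternative the paper mentions at the start of Section~\ref{Kahler structures section} and deliberately forgoes. Since the adapted basis is $J$-stable up to sign, your components are the paper's $c$-values in disguise, so the resulting conditions are the same. Your route makes clear where integrability enters: closure of $\omega$ alone sees the blocks $f_i(u_0^j)$ only through $f_2(u_0^1)-f_1(u_0^2)$. It is the equations of Theorem~\ref{theorem 4} that kill each $f_i(u_0^j)$ separately. The paper's route, in exchange, yields the explicit Bismut connection, which it reuses in Section~\ref{Examples}.

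Two small corrections. First, the step you flag as the main obstacle is immediate. The only nontrivial components of that type are $d\omega(\mathsf e_1,u_0^1,u_0^2)=\langle\mathsf a_2,u_0^1\rangle-\langle\mathsf a_1,u_0^2\rangle$ and $d\omega(\mathsf e_2,u_0^1,u_0^2)=\langle\mathsf b_2,u_0^1\rangle-\langle\mathsf b_1,u_0^2\rangle$; there are two of them, not four. Together with the two Hermitian equations they give $\langle\mathsf a_2-\mathsf b_1,u_0^1\rangle=\langle\mathsf a_2-\mathsf b_1,u_0^2\rangle=0$, so no Jacobi fallback is needed. Second, in the sufficiency direction these two components vanish only after you invoke those same Hermitian equations; they are not identities.
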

\begin{proof}
    Using relation \eqref{relation c and omega}, for any $u, v, w \in \Gamma \cap (\mathrm{span}\{u_0^1, u_0^2\})^\perp$, we compute:
    \begin{align*}
        c(\mathsf{e}_1, \mathsf{e}_2, u_0^1) &= \langle \mathsf{a}_1, u_0^2 \rangle - \langle \mathsf{a}_2, u_0^1 \rangle, &
        c(\mathsf{e}_1, \mathsf{e}_2, u_0^2) &= \langle \mathsf{b}_1, u_0^2 \rangle - \langle \mathsf{b}_2, u_0^1 \rangle, \\
        c(\mathsf{e}_1, \mathsf{e}_2, u) &= \langle f_2(u_0^1) - f_1(u_0^2), Ju \rangle, &
        c(\mathsf{e}_1, u_0^1, u_0^2) &= 0, \\
        c(\mathsf{e}_1, u_0^1, u) &= \langle J\mathsf{a}_1, u \rangle, &
        c(\mathsf{e}_1, u_0^2, u) &= \langle J\mathsf{b}_1, u \rangle, \\
        c(\mathsf{e}_1, u, v) &= \langle Ju, f_1(Jv) \rangle, &
        c(\mathsf{e}_2, u_0^1, u_0^2) &= 0, \\
        c(\mathsf{e}_2, u_0^1, u) &= \langle J\mathsf{a}_2, u \rangle, &
        c(\mathsf{e}_2, u_0^2, u) &= \langle J\mathsf{b}_2, u \rangle, \\
        c(\mathsf{e}_2, u, v) &= \langle Ju, f_2(Jv) \rangle, &
        c(u_0^1, u_0^2, u) &= c(u_0^1, u, v) = c(u_0^2, u, v) = c(u, v, w) = 0.
    \end{align*}
    Using these and formula \eqref{Bismut-Levi-Civita equation}, the Bismut connection is given by:
    \begin{align*}
        \nabla^B_{\mathsf{e}_1} \mathsf{e}_1 &= \mathsf{a}_1, &
        \nabla^B_{\mathsf{e}_1} \mathsf{e}_2 &= \mathsf{b}_1, &
        \nabla^B_{\mathsf{e}_1} u_0^1 &= J\mathsf{a}_1, &
        \nabla^B_{\mathsf{e}_1} u_0^2 &= J\mathsf{b}_1, \\
        \nabla^B_{\mathsf{e}_2} \mathsf{e}_1 &= \mathsf{a}_2, &
        \nabla^B_{\mathsf{e}_2} \mathsf{e}_2 &= \mathsf{b}_2, &
        \nabla^B_{\mathsf{e}_2} u_0^1 &= J\mathsf{a}_2, &
        \nabla^B_{\mathsf{e}_2} u_0^2 &= J\mathsf{b}_2, \\
        \nabla^B_{\mathsf{e}_1} u &= -\langle \mathsf{a}_1, u \rangle \mathsf{e}_1 - \langle \mathsf{b}_1, u \rangle \mathsf{e}_2 - f_1(u), \\
        \nabla^B_{\mathsf{e}_2} u &= -\langle \mathsf{a}_2, u \rangle \mathsf{e}_1 - \langle \mathsf{b}_2, u \rangle \mathsf{e}_2 - f_2(u), \\
        \nabla^B_{u_0^1} x &= \nabla^B_{u_0^2} x = \nabla^B_u x = 0.
    \end{align*}
    The torsion tensor of $\nabla^B$ is then:
    \begin{align*}
        T^B(\mathsf{e}_1, \mathsf{e}_2) &= \mathsf{b}_1 - \mathsf{a}_2, \\
        T^B(\mathsf{e}_1, u_0^1) &= \langle \mathsf{a}_1, u_0^1 \rangle \mathsf{e}_1 + \langle \mathsf{a}_2, u_0^1 \rangle \mathsf{e}_2 + J\mathsf{a}_1, \\
        T^B(\mathsf{e}_1, u_0^2) &= \langle \mathsf{a}_1, u_0^2 \rangle \mathsf{e}_1 + \langle \mathsf{a}_2, u_0^2 \rangle \mathsf{e}_2 + J\mathsf{b}_1, \\
        T^B(\mathsf{e}_1, u) &= \langle \mathsf{a}_2 - \mathsf{b}_1, u \rangle \mathsf{e}_2 - f_1(u), \\
        T^B(\mathsf{e}_2, u_0^1) &= \langle \mathsf{b}_1, u_0^1 \rangle \mathsf{e}_1 + \langle \mathsf{b}_2, u_0^1 \rangle \mathsf{e}_2 + J\mathsf{a}_2, \\
        T^B(\mathsf{e}_2, u_0^2) &= \langle \mathsf{b}_1, u_0^2 \rangle \mathsf{e}_1 + \langle \mathsf{b}_2, u_0^2 \rangle \mathsf{e}_2 + J\mathsf{b}_2, \\
        T^B(\mathsf{e}_2, u) &= \langle \mathsf{b}_1 - \mathsf{a}_2, u \rangle \mathsf{e}_1- f_2(u), \\
        T^B(u_0^1, u_0^2) &= 0, \\
        T^B(u_0^1, u) &= -\langle f_1(u_0^1), u \rangle \mathsf{e}_1 - \langle f_2(u_0^1), u \rangle \mathsf{e}_2, \\
        T^B(u_0^2, u) &= -\langle f_1(u_0^2), u \rangle \mathsf{e}_1 - \langle f_2(u_0^2), u \rangle \mathsf{e}_2, \\
        T^B(u, v) &= -\langle f_1(u), v \rangle \mathsf{e}_1 - \langle f_2(u), v \rangle \mathsf{e}_2.
    \end{align*}
    It is clear that $T^B = 0$ if and only if $f_1 = f_2 = 0$ and $\mathsf{b}_1 = \mathsf{a}_2$.
\end{proof}


\section{Examples}\label{Examples}
In this section, we give some examples to illustrate the simplicity of the results of the present paper, although some of them are familiar to specialists.
\begin{example}\label{example 8}
    (Four-dimensional abelian WKT structure of Type I)\\
    Let $\mathfrak{g}$ be the Lie algebra $A_{4,12}$ from \cite{Patera-Sharp-Winternitz}, with non-zero Lie brackets:
    \begin{equation*}
        [e_1, e_3] = e_1, \quad [e_2, e_3] = e_2, \quad [e_1, e_4] = -e_2, \quad [e_2, e_4] = e_1.
    \end{equation*}
    Equip $\mathfrak{g}$ with the inner product $\langle \cdot, \cdot \rangle$ for which $\{e_1, e_2, e_3, e_4\}$ is an orthonormal basis. Then $\mathfrak{g} = A_{4,12}$ is a Lie algebra with a two-dimensional derived algebra. In our notation, we have $\Gamma = \mathrm{span}\{e_3, e_4\}$ and:
    \begin{align*}
        \mathsf{e}_1 &= e_1, & \mathsf{e}_2 &= e_2, & \mathsf{a}_1 &= -e_3, & \mathsf{a}_2 &= e_4, \\
        \mathsf{b}_1 &= -e_4, & \mathsf{b}_2 &= -e_3, & f_1 &= 0, & f_2 &= 0.
    \end{align*}
    Consider the almost complex structure $J$ defined by $J\mathsf{e}_1 = \mathsf{e}_2, Je_3 = e_4$.

    By Remark~\ref{remark 3}, $(J, g)$ is an abelian Hermitian structure of Type I. Theorem~\ref{theorem 5} (or Corollary~\ref{corollary 6}) shows that $(J, g)$ is not K\"ahler. Moreover,
    \begin{equation*}
        dc(e_1, e_2, e_3, e_4) = -4 \neq 0,
    \end{equation*}
    so $(J, g)$ is a four-dimensional abelian WKT structure of Type I.

    The Bismut connection of $(J, g)$ is given by:
    \begin{align*}
        \nabla^B_{\mathsf{e}_1} \mathsf{e}_1 &= -e_3, &
        \nabla^B_{\mathsf{e}_1} \mathsf{e}_2 &= -e_4, &
        \nabla^B_{\mathsf{e}_1} e_3 &= \mathsf{e}_1, &
        \nabla^B_{\mathsf{e}_1} e_4 &= \mathsf{e}_2, \\
        \nabla^B_{\mathsf{e}_2} \mathsf{e}_1 &= e_4, &
        \nabla^B_{\mathsf{e}_2} \mathsf{e}_2 &= -e_3, &
        \nabla^B_{\mathsf{e}_2} e_3 &= \mathsf{e}_2, &
        \nabla^B_{\mathsf{e}_2} e_4 &= -\mathsf{e}_1, \\
        \nabla^B_{e_3} x &= 0, &
        \nabla^B_{e_4} x &= 0, & & \forall x \in \mathfrak{g}.
    \end{align*}
\end{example}

\begin{example}\label{example 10}
    (Four-dimensional abelian WKT structure of Type II)\\
    Let the Lie algebra $\mathfrak{g}$ and the inner product $\langle \cdot, \cdot \rangle$ be the same as in Example~\ref{example 8}. Let $J$ be the abelian complex structure of Type II defined by $Je_1 = e_4, Je_2 = e_3$.

    Then we have $\mathsf{e}_1 = e_1$, $\mathsf{e}_2 = e_2$, $\mathsf{a}_1 = -e_3$, $\mathsf{a}_2 = e_4$, $\mathsf{b}_1 = -e_4$, $\mathsf{b}_2 = -e_3$, $f_1 = f_2 = 0$, $u_0^1 = e_4$, and $u_0^2 = e_3$. Theorem~\ref{theorem 4} shows that $(J, g)$ is a Hermitian structure. However, since $\mathsf{b}_1 = -e_4 \neq e_4 = \mathsf{a}_2$, Theorem~\ref{theorem 7} implies it is not K\"ahler. Furthermore, from the proof of Theorem~\ref{theorem 7}, we find
    \begin{equation*}
        dc(e_1, e_2, e_3, e_4) = 4 \neq 0,
    \end{equation*}
    so $(J, g)$ is a four-dimensional abelian WKT structure of Type II. The Bismut connection for this Hermitian structure is the same as in Example~\ref{example 8}.

    This example, together with Example~\ref{example 8}, shows that a Lie algebra equipped with an inner product can simultaneously admit two different WKT structures of Types I and II with the same Bismut connection.
\end{example}
In \cite{Zhang-Zheng}, it is shown that flat Hermitian Lie algebras are K\"ahler. In the following we give an example of a non-flat Hermitian Lie algebra which is K\"ahler.
\begin{example}\label{example 12}
    (Four-dimensional abelian K\"ahler structure of Type II)\\
    Let $\mathfrak{g}$ be the four-dimensional Lie algebra with non-zero Lie brackets:
    \begin{equation*}
        [e_3, e_1] = e_1, \quad [e_4, e_2] = e_2,
    \end{equation*}
    equipped with the inner product for which $\{e_1, e_2, e_3, e_4\}$ is an orthonormal basis. Then we have:
    \begin{align*}
        \mathsf{e}_1 &= e_1, & \mathsf{e}_2 &= e_2, & \mathsf{a}_1 &= e_3, & \mathsf{b}_2 &= e_4, \\
        \mathsf{b}_1 &= 0, & \mathsf{a}_2 &= 0, & u_0^1 &= e_3, & u_0^2 &= e_4, & f_1 = f_2 &= 0.
    \end{align*}
    Consider the almost complex structure $J$ defined by $J\mathsf{e}_1 = e_3$ and $J\mathsf{e}_2 = e_4$. One can verify that $J$ is abelian. Therefore, by Theorem~\ref{theorem 4}, $(J, g)$ is a Hermitian structure, and by Theorem~\ref{theorem 7}, it is K\"ahler.

    The Bismut connection of this structure is:
    \begin{align*}
        \nabla^B_{\mathsf{e}_1} \mathsf{e}_1 &= e_3, &
        \nabla^B_{\mathsf{e}_1} \mathsf{e}_2 &= 0, &
        \nabla^B_{\mathsf{e}_1} e_3 &= -\mathsf{e}_1, &
        \nabla^B_{\mathsf{e}_1} e_4 &= 0, \\
        \nabla^B_{\mathsf{e}_2} \mathsf{e}_1 &= 0, &
        \nabla^B_{\mathsf{e}_2} \mathsf{e}_2 &= e_4, &
        \nabla^B_{\mathsf{e}_2} e_3 &= 0, &
        \nabla^B_{\mathsf{e}_2} e_4 &= -\mathsf{e}_2, \\
        \nabla^B_{e_3} x &= 0, &
        \nabla^B_{e_4} x &= 0, & & \forall x \in \mathfrak{g}.
    \end{align*}
    In this case we have $K(\mathsf{e}_1, u_0^1) = -1$, which shows that this four-dimensional abelian K\"ahler structure of Type II is not flat.
\end{example}

\begin{example}\label{example 13}
    (Six-dimensional abelian SKT structure of Type I)\\
Now we give an example on a nilpotent Lie group of dimension six. For more details about almost Hermitian structures and strong K\"ahler structure with torsion in dimension six, see \cite{Abbena-Garbiero-Salamon} and \cite{Fino-Parton-Salamon}. Suppose that $\mathfrak{g}$ is the Lie algebra $A_{6,4}$ from \cite{Patera-Sharp-Winternitz}, with non-zero Lie brackets:
    \begin{equation*}
        [e_5, e_6] = e_1, \quad [e_5, e_3] = e_2, \quad [e_6, e_4] = e_2.
    \end{equation*}
    Equip $\mathfrak{g}$ with the inner product $\langle \cdot, \cdot \rangle$ for which $\{e_1, \ldots, e_6\}$ is an orthonormal basis. In the notation of this paper, we have $\Gamma = \mathrm{span}\{e_3, \ldots ,e_6\}$ and:
    \begin{equation*}
        \mathsf{e}_1 = e_1,  \mathsf{e}_2 = e_2, \mathsf{a}_1=\mathsf{a}_2=\mathsf{b}_1=\mathsf{b}_2=0.
    \end{equation*}
Also, the matrix representations of $f_1$ and $f_2$, with respect to the basis $\{e_3, \ldots ,e_6\}$ are as follows:
\begin{equation*}
    f_1=\left(
       \begin{array}{cccc}
         0 & 0 & 0 & 0 \\
         0 & 0 & 0 & 0 \\
         0 & 0 & 0 & -1 \\
         0 & 0 & 1 & 0 \\
       \end{array}
     \right), \ \ \ f_2=\left(
                      \begin{array}{cccc}
                        0 & 0 & 1 & 0 \\
                        0 & 0 & 0 & 1 \\
                        -1 & 0 & 0 & 0 \\
                        0 & -1 & 0 & 0 \\
                      \end{array}
                    \right).
\end{equation*}
Now, consider the almost Hermitian structure $J$ of type I defined by:
    \begin{align*}
        J\mathsf{e}_1 &= \mathsf{e}_2, & J\mathsf{e}_3 &= \mathsf{e}_4, & J e_5 &= e_6.
    \end{align*}
$J$ is abelian, so using Remark~\ref{remark 3}, $(J, g)$ is a Hermitian structure of Type I. Corollary~\ref{corollary 6} shows that $(J, g)$ is not a K\"ahler structure. Using $\mathsf{a}_1=\mathsf{a}_2=\mathsf{b}_1=\mathsf{b}_2=0$, a direct computation shows that $dc=0$. Therefore, $(J,g)$ is a six-dimensional abelian SKT structure of Type I.

For its Bismut connection, we have:
    \begin{align*}
        \nabla^B_{\mathsf{e}_1} \mathsf{e}_1 &=\nabla^B_{\mathsf{e}_1} \mathsf{e}_2=\nabla^B_{\mathsf{e}_2} \mathsf{e}_1=\nabla^B_{\mathsf{e}_2} \mathsf{e}_2=\nabla^B_{\mathsf{e}_1} e_3=\nabla^B_{\mathsf{e}_1} e_4 =0, \\
        \nabla^B_{\mathsf{e}_1} e_5 &= -e_6, \ \ \ \ \ \nabla^B_{\mathsf{e}_1} e_6 = e_5, \ \ \ \ \ \nabla^B_{\mathsf{e}_2} e_3 = e_5, \\
        \nabla^B_{\mathsf{e}_2} e_4 &= e_6, \ \ \ \ \ \nabla^B_{\mathsf{e}_2} e_5 = -e_3, \ \ \ \ \ \nabla^B_{\mathsf{e}_2} e_6 = -e_4,\\
        \nabla^B_x\mathsf{e}_1 &= \nabla^B_x\mathsf{e}_2=\nabla^B_xy = 0, \ \ \ \ \ \forall x,y \in\Gamma.
    \end{align*}
\end{example}
In the following two examples, for any $n\geq4$, we provide a $2n$-dimensional abelian WKT structure of Type I, and for any $n\geq3$, a $2n$-dimensional abelian SKT structure of Type II.
\begin{example}\label{example 14}
    ($2n$-dimensional abelian WKT structure of Type I ($n\geq4$))\\
Let $n=m+1$, where $m\geq3$, and let $\frak{g}=\mathrm{span}\{\mathsf{e}_1,\mathsf{e}_2,x_1,\ldots, x_m, y_1,\ldots, y_m\}$ be a Lie algebra with the Lie brackets:
\begin{equation*}
    [x_1,x_2]=\mathsf{e}_2, \ \ \ \ [x_i,y_i]=\mathsf{e}_1, \ \ i=1,\ldots,m,
\end{equation*}
equipped with the inner product $\langle \cdot, \cdot \rangle$ for which the above basis is orthonormal. It follows directly from the bracket relations that $\frak{g}$ is a two-step nilpotent Lie algebra with a two-dimensional derived algebra, and $\mathsf{a}_1=\mathsf{a}_2=\mathsf{b}_1=\mathsf{b}_2=0$. Also, the matrix representations of $f_1$ and $f_2$ with respect to the basis $\{x_1,\ldots, x_m, y_1,\ldots, y_m\}$ are as follows:
\[
f_1 =
\left(
\begin{array}{c|c}
0_m & -I_m \\ \hline
I_m & 0_m
\end{array}
\right), \ \ \ \
f_2 =
\left(
\begin{array}{c|c}
\begin{matrix}
0 & -1 \\
1 & 0
\end{matrix} & 0_{2\times(2m-2)} \\ \hline
0_{(2m-2)\times2} & 0_{(2m-2)\times(2m-2)}
\end{array}
\right).
\]

Now, we consider the type I almost Hermitian structure $J$ on $\frak{g}$ defined by:
\begin{eqnarray*}
  && J\mathsf{e}_1=\mathsf{e}_2, \ J\mathsf{e}_2=-\mathsf{e}_1, \ Jx_1=x_2, \ Jx_2=-x_1, \\ \\
  && Jy_1=y_2, \ Jy_2=-y_1, \ Jx_i=y_i, \ Jy_i=-x_i, \ \ \  i=3,\ldots,m.
\end{eqnarray*}
We see that $J$ is abelian, and using Remark \ref{remark 3} it is Hermitian. Since $f_1\neq0$ (and $f_2\neq0$), Theorem \ref{theorem 5} implies that the Hermitian structure $J$ is not K\"ahler. One can also use the equation $d\omega(\mathsf{e}_1,x_1,x_2)=1$ to see that $J$ is not K\"ahler. Moreover, $dc(x_1,y_1,x_2,y_2)=2$, so the 3-form $c$ is not closed. Thus, the Hermitian structure $(J,\langle \cdot, \cdot \rangle)$ is a WKT structure.
\end{example}

The following family is particularly useful because it provides, in every even dimension $2n\ge6$, an explicit abelian SKT structure of Type II.
\begin{example}\label{example 15}
    ($2n$-dimensional abelian SKT structure of Type II ($n\geq3$))\\
Let $\frak{r}_2=\mathrm{span}\{\mathsf{e}_1,u_0^1\}$ be the non-abelian two-dimensional Lie algebra with the Lie bracket $[\mathsf{e}_1,u_0^1]=\mathsf{e}_1$. Suppose that $\frak{h}_3$ denotes the Heisenberg Lie algebra $\frak{h}_3=\mathrm{span}\{\mathsf{e}_2,u_1,v_1\}$ with the Lie bracket $[u_1,v_1]=\mathsf{e}_2$. Also, suppose that $\Bbb{R}^{2n-5}=\mathrm{span}\{u_0^2,u_2,\ldots,u_{n-2},v_2,\ldots,v_{n-2}\}$ with $n\geq4$ (if $n=3$ set $\Bbb{R}=\mathrm{span}\{u_0^2\}$) is the trivial abelian Lie algebra. Now let $\frak{g}=\frak{r}_2\oplus\frak{h}_3\oplus\Bbb{R}^{2n-5}$, with $n\geq3$, be the $2n$-dimensional Lie algebra equipped with an inner product such that the basis $\{\mathsf{e}_1,\mathsf{e}_2,u_0^1,u_0^2,u_1,\ldots, u_{n-2}, v_1,\ldots, v_{n-2}\}$ is an orthonormal basis. In this case, we have $\mathsf{a}_1 = -u_0^1$, $\mathsf{a}_2=\mathsf{b}_1=\mathsf{b}_2=0$, $f_1=0$, and $f_2(u_1)=v_1$. We define an almost complex structure $J$ by
\[
\begin{aligned}
J\mathsf{e}_1&=u_0^1, &
Ju_0^1&=-\mathsf{e}_1,\\
J\mathsf{e}_2&=u_0^2, &
Ju_0^2&=-\mathsf{e}_2,\\
Ju_i&=v_i, &
Jv_i&=-u_i,
\qquad i=1,\ldots,n-2.
\end{aligned}
\]

$J$ is abelian, so $(J,\langle \cdot, \cdot \rangle)$ is Hermitian. On the other hand, $f_2\neq0$ together with Theorem \ref{theorem 7} shows that the abelian Hermitian structure $(J,\langle \cdot, \cdot \rangle)$
is not K\"ahler. \\
Let $\{\bar{\mathsf{e}}_1,\bar{\mathsf{e}}_2,\bar{u}_0^1,\bar{u}_0^2,\bar{u}_1,\ldots, \bar{u}_{n-2}, \bar{v}_1,\ldots, \bar{v}_{n-2}\}$ be the dual basis. The structure equations are
\[
d\bar{\mathsf{e}}_1=-\bar{\mathsf{e}}_1\wedge \bar{u}_0^1,
\qquad
d\bar{\mathsf{e}}_2=-\bar{u}_1\wedge \bar{v}_1,
\]
while all other differentials of basis one-forms vanish.
The  K\"ahler $2$-form
\[
\omega(X,Y)=g(JX,Y)
\]
is therefore
\[
\omega
=
\bar{\mathsf{e}}_1\wedge \bar{u}_0^1
+
\bar{\mathsf{e}}_2\wedge \bar{u}_0^2
+
\sum_{i=1}^{n-2}\bar{u}_i\wedge \bar{v}_i.
\]
Consequently,
\[
d\omega
=
-\bar{u}_0^2\wedge \bar{u}_1\wedge \bar{v}_1.
\]
Now for the $3$-form $c$, we have:
\[
c
=
-\bar{\mathsf{e}}_2\wedge \bar{u}_1\wedge \bar{v}_1.
\]
Since
\[
d\bar{\mathsf{e}}_2=-\bar{u}_1\wedge \bar{v}_1,
\qquad
d\bar{u}_1=d\bar{v}_1=0,
\]
we obtain
\[
dc
=
-d\bar{\mathsf{e}}_2\wedge \bar{u}_1\wedge \bar{v}_1
=
(\bar{u}_1\wedge \bar{v}_1)\wedge \bar{u}_1\wedge \bar{v}_1
=
0.
\]
Hence, the corresponding Hermitian structure is SKT.
\end{example}


\section*{Declarations}

{\textbf{Conflict of interest:} On behalf of all authors, the corresponding author states that there is no conflict of interest.\\

{\textbf{Ethical approval:} Not applicable.\\

{\textbf{Competing interests:} The author has no relevant financial or non-financial interests to disclose.\\

{\textbf{Authors' contributions:}} Not applicable.\\

{\textbf{Funding:}} There is not any financial support. \\

{\textbf{Availability of data and materials:}} Data sharing not applicable to this article as no datasets were generated or analysed during the current study.\\


\end{document}